\documentclass[11pt,twoside]{article}
\usepackage{bbm}
\usepackage{amsmath,amssymb}

\usepackage{hyperref}
\hypersetup{
    colorlinks,
    citecolor=black,
    filecolor=black,
    linkcolor=black,
    urlcolor=black}

\makeatletter
\newcommand*{\mint}[1]{%
  \mint@l{#1}{}%
}
\newcommand*{\mint@l}[2]{%
  \@ifnextchar\limits{%
    \mint@l{#1}%
  }{%
    \@ifnextchar\nolimits{%
      \mint@l{#1}%
    }{%
      \@ifnextchar\displaylimits{%
        \mint@l{#1}%
      }{%
        \mint@s{#2}{#1}%
      }%
    }%
  }%
}
\newcommand*{\mint@s}[2]{%
  \@ifnextchar_{%
    \mint@sub{#1}{#2}%
  }{%
    \@ifnextchar^{%
      \mint@sup{#1}{#2}%
    }{%
      \mint@{#1}{#2}{}{}%
    }%
  }%
}
\def\mint@sub#1#2_#3{%
  \@ifnextchar^{%
    \mint@sub@sup{#1}{#2}{#3}%
  }{%
    \mint@{#1}{#2}{#3}{}%
  }%
}
\def\mint@sup#1#2^#3{%
  \@ifnextchar_{%
    \mint@sup@sub{#1}{#2}{#3}%
  }{%
    \mint@{#1}{#2}{}{#3}%
  }%
}
\def\mint@sub@sup#1#2#3^#4{%
  \mint@{#1}{#2}{#3}{#4}%
}
\def\mint@sup@sub#1#2#3_#4{%
  \mint@{#1}{#2}{#4}{#3}%
}
\newcommand*{\mint@}[4]{%
  \mathop{}%
  \mkern-\thinmuskip
  \mathchoice{%
    \mint@@{#1}{#2}{#3}{#4}%
        \displaystyle\textstyle\scriptstyle
  }{%
    \mint@@{#1}{#2}{#3}{#4}%
        \textstyle\scriptstyle\scriptstyle
  }{%
    \mint@@{#1}{#2}{#3}{#4}%
        \scriptstyle\scriptscriptstyle\scriptscriptstyle
  }{%
    \mint@@{#1}{#2}{#3}{#4}%
        \scriptscriptstyle\scriptscriptstyle\scriptscriptstyle
  }%
  \mkern-\thinmuskip
  \int#1%
  \ifx\\#3\\\else_{#3}\fi
  \ifx\\#4\\\else^{#4}\fi
}
\newcommand*{\mint@@}[7]{%
  \begingroup
    \sbox0{$#5\int\m@th$}%
    \sbox2{$#5\int_{}\m@th$}%
    \dimen2=\wd0 %
    \let\mint@limits=#1\relax
    \ifx\mint@limits\relax
      \sbox4{$#5\int_{\kern1sp}^{\kern1sp}\m@th$}%
      \ifdim\wd4>\wd2 %
        \let\mint@limits=\nolimits
      \else
        \let\mint@limits=\limits
      \fi
    \fi
    \ifx\mint@limits\displaylimits
      \ifx#5\displaystyle
        \let\mint@limits=\limits
      \fi
    \fi
    \ifx\mint@limits\limits
      \sbox0{$#7#3\m@th$}%
      \sbox2{$#7#4\m@th$}%
      \ifdim\wd0>\dimen2 %
        \dimen2=\wd0 %
      \fi
      \ifdim\wd2>\dimen2 %
        \dimen2=\wd2 %
      \fi
    \fi
    \rlap{%
      $#5%
        \vcenter{%
          \hbox to\dimen2{%
            \hss
            $#6{#2}\m@th$%
            \hss
          }%
        }%
      $%
    }%
  \endgroup
}
\usepackage{mathrsfs}
\usepackage{amssymb}
\usepackage{amsmath}
\usepackage{amsthm}
\usepackage{amsfonts}
\usepackage{color}
\usepackage{graphicx}
\usepackage[active]{srcltx}
\usepackage{tikz}
\usepackage{pgflibraryarrows}
\usepackage{pgflibrarysnakes}

\usepackage[cp1252]{inputenc}

\usepackage{mathrsfs}
\usepackage{graphicx}

\usepackage[active]{srcltx}

\allowdisplaybreaks

\usepackage{titletoc}
\titlecontents{section}[0pt]{\addvspace{2pt}\filright}
              {\contentspush{\thecontentslabel\ }}
              {}{\titlerule*[8pt]{.}\contentspage}

\def\rr{{\mathbb R}}

\def\fz{\infty}
\def\az{\alpha}

\def\lz{\lambda}

\def\bdz{\Delta}
\def\ez{\epsilon}

\def\gz{{\gamma}}

\def\bint{{\ifinner\rlap{\bf\kern.35em--}
\int\else\rlap{\bf\kern.45em--}\int\fi}\ignorespaces}

\def\bbint{{\ifinner\rlap{\bf\kern.35em--}
\hspace{0.078cm}\int\else\rlap{\bf\kern.45em--}\int\fi}\ignorespaces}

\newtheorem{thm}{Theorem}[section]
\newtheorem{lem}[thm]{Lemma}
\newtheorem{prop}[thm]{Proposition}
\newtheorem{rem}[thm]{Remark}
\numberwithin{equation}{section}

\title
{\Large\bf  Regularity of stable radial  solutions
to semilinear elliptic equations in MEMS problems
\footnotetext{\hspace{-0.35cm}
\endgraf{2000 {\it Mathematics Subject Classification: 35J61, 35B65, 35B35.}}
\endgraf  {\it Key words and phases: MEMS problems, stable radial solution, regularity}
\endgraf
F. Peng was supported by National Key R\&D Program of China 2025YFA1018400.}}

\author{Fa Peng and Salvador Villegas}
\begin{document}

\arraycolsep=1pt
\allowdisplaybreaks
 \maketitle

\begin{center}
\begin{minipage}{13.5cm}\small
 \noindent{\bf Abstract.}\quad
This paper investigates  the regularity of stable radial solutions to
semilinear elliptic equations arising in MEMS problems, modeled by the Dirichlet problem
$-\bdz u=f(u)$ in the unit ball $B_1$, where the nonlinearity
$f\in C^1([0,1))$ is nonnegative and satisfies $\int^1_0f(s)\,ds=+\fz$. We focus on the case where
$f$ blows up as $u\to 1^{-}$. Micro-electro-mechanical systems (MEMS) are widely used devices in engineering and technology. Our main result establishes for dimensions
$2\le n\le 6$, every stable radial solution is regular, meaning
$\|u\|_{L^\fz(B_1)}<1$.  This result gives a positive answer to an open problem
posed by Bruera and Cabr\'e concerning the regularity of stable solutions
for singular nonlinearities without requiring a
Crandall-Rabinowitz type condition, at least in the radial case.

\end{minipage}
\end{center}


\section{Introduction}

This paper focuses on the issues associated with MEMS problem:
\begin{align}\label{d-p}
\left\{
\begin{aligned}
-\bdz u & = f(u)&\quad{\rm in}&\quad B_1 \setminus\{ 0\}  \\
u& =0 \quad&{\rm on}&\quad \partial B_1,\\
\end{aligned}
\right.
\end{align}
where $B_1$ is the unit ball centered at the origin in $\rr^n$($n\ge 2$),  $0\le u\le 1$ is a stable radial  solution, and $f:[0,1)\to [0,+\fz)$  is
$C^1$-function which blow-up at
$u=1$.  Throughout the paper, for each $0\le t\le 1$ we define $F(t):=\int^t_0 f(s)\,ds$ and require that $F(1)=\int^{1}_0f(s)\,ds=+\fz$.

Recall that a radial solution $u$ of \eqref{d-p}  is called stable if
$$\int_{B_1}f'(u)\xi^2\,dx\le \int_{B_1}|\nabla \xi|^2\,dx$$
for every $\xi \in C^\fz(B_1)$ with compact support in $B_1\backslash\{0\}$, where
$f'(s)$ denotes the derivative of $f$ at the point $s$. Furthermore, if
$\|u\|_{L^\fz(B_1)}<1$, we refer to $u$ as regular in the sense of Bruera and Cabr\'e \cite{bc}.
Notably, the blow-up of $f(u)$ at $u=1$ implies that the stable radial solution $u$ does not need to be regular. For instance, when $n\ge 7$ and $f(t)=\frac{n-1}{1-t}$, the radial function $1-|x|$ is always stable yet satisfies
$\|u\|_{L^\fz(B_1)}=1$. In contrast, for dimensions $2\le n\le 6$, this function is not a stable solution. For further details, see Bruera and Cabr\'e \cite{bc} and  Meadows \cite{m04}.

This example naturally leads to the following question: in dimensions $2\le n\le 6$,  is every stable radial solution of \eqref{d-p} regular?

In the radial case, we provide a complete answer to this question.
\begin{thm}\label{thm}
Let $0\leq f\in C^1([0,1))$ satisfying $F(1)=+\fz$.
Assume that $0\le u\le 1$ is a  stable radial solution to \eqref{d-p}.
If $2\le n\le 6$, then $u$ is regular and satisfies
$$\frac 12 u_r(1)^2\le F(\|u\|_{L^\fz(B_1)})\le  Cu_r(1)^2.$$
Additionally, if $f$ is nondecreasing, then
 $$F(\|u\|_{L^\fz(B_1)})\le C,$$
where  $C>0$ is a universal constant.
\end{thm}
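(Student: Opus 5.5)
Since $u$ is radial, I will write $u=u(r)$. It satisfies $-(r^{n-1}u_r)_r=r^{n-1}f(u)\ge 0$ on $(0,1)$, so $r^{n-1}u_r$ is nonincreasing. Together with the boundedness of $u$, this shows that $u$ is radially nonincreasing, $u_r\le 0$, and $\|u\|_{L^\fz(B_1)}=u(0^+)$. I will write $M:=u(0)$ for this value.

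\emph{Lower bound.} Multiply the ODE $u_{rr}+\frac{n-1}{r}u_r=-f(u)$ by $u_r$ and integrate over $(\rho,1)$:
$$\tfrac12 u_r(1)^2-\tfrac12u_r(\rho)^2+(n-1)\int_\rho^1 \frac{u_r^2}{r}\,dr=F(u(\rho))-F(0).$$
Since $F(0)=0$ and the integral term is nonnegative, letting $\rho\to0$ gives $\tfrac12u_r(1)^2\le F(M)$, provided $\liminf_{\rho\to0}u_r(\rho)^2$ is finite. If it is not, a limiting argument along a suitable sequence still gives the inequality.

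\emph{Upper bound.} This is the main step. The key identity is the one above, which gives
$$F(u(\rho))\le \tfrac12 u_r(1)^2+(n-1)\int_\rho^1\frac{u_r^2}{r}\,dr .$$
So it suffices to control $\int_0^1 u_r^2 r^{-1}\,dr$ by $C u_r(1)^2$. Note that $F(M)<\fz$ immediately gives $M<1$, which is the regularity claim.

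To control this integral I will use stability in the standard radial form (Cabr\'e--Capella, Villegas). Differentiating the equation gives $-\bdz u_r+\frac{n-1}{r^2}u_r=f'(u)u_r$. I test stability with $\xi=u_r\eta$, where $\eta$ is radial with compact support in $B_1\setminus\{0\}$. This yields
$$(n-1)\int_{B_1}\frac{u_r^2}{r^2}\eta^2\,dx\le\int_{B_1}u_r^2|\nabla\eta|^2\,dx.$$
Next I choose $\eta=r^{-\az}$ truncated near $0$ and near $1$. The truncation near $1$ is where $u_r(1)^2$ enters, because $|u_r|\le |u_r(1)|$ there via monotonicity of $r^{n-1}|u_r|$. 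The condition $\az^2<n-1$ lets me absorb the main term, leaving
$$\int_0^1 u_r^2 r^{n-3-2\az}\,dr\le C u_r(1)^2.$$
To reach the weight $r^{-1}$ I need $n-3-2\az\le -1$, i.e.\ $\az\ge (n-2)/2$. Combined with $\az<\sqrt{n-1}$, this requires $(n-2)^2<4(n-1)$, i.e.\ $n^2-8n+8<0$, which holds for $2\le n\le 6$. This dimension restriction is exactly the critical point of the argument.

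The main obstacle is handling the cutoffs rigorously when $u$ may not be smooth at $0$ and $u_r$ may blow up there. I will use $\eta$ equal to $\ez^{-\az}$ on $(0,\ez)$ (or cut off to zero on a tinier ball) and check that the boundary terms vanish or have the right sign as $\ez\to0$. Near $r=1$, I will use the bound $r^{n-1}|u_r|\le|u_r(1)|$ on $[1/2,1]$, together with a Hardy-type control, so that the estimate becomes $Cu_r(1)^2$.

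\emph{Nondecreasing $f$.} For the final claim I need a universal bound on $u_r(1)^2$. Multiplying the equation by $u$ and integrating, with stability applied to a cutoff-type test function, should bound $\int f(u)$, and hence $|u_r(1)|=\int_0^1 r^{n-1}f(u)\,dr$. I will use the monotonicity of $f$ to compare $f(u)$ on $B_{1/2}$ with its values on the annulus, and use $u\le1$ there. This is analogous to the Cabr\'e boundary estimate in the radial case.
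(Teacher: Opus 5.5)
\textbf{Origin truncation.} Your architecture is the paper's: the energy identity reduces everything to $\int_0^1 u_r^2/r\,dr\le C u_r(1)^2$. The paper obtains this from Villegas' pointwise bound $|u_r(t)|\le C_n|u_r(1)|\,t^{-n/2+\sqrt{n-1}+1}$, a result stated for $W^{1,2}$ stable radial solutions that may be singular at $0$. You instead want a Cabr\'e--Capella type weighted bound directly from $\xi=u_r\eta$, and your exponent bookkeeping ($\alpha^2<n-1$, $\alpha\ge(n-2)/2$, $n^2-8n+8<0$) is correct.

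However, the step you defer is exactly where the singular case lives, and the implementation you suggest does not work. Stability is only available for $\xi$ supported away from $0$, so $\eta\equiv\varepsilon^{-\alpha}$ on $(0,\varepsilon)$ is not admissible. Cutting it off on $(\delta,2\delta)$ costs about $\varepsilon^{-2\alpha}\delta^{-2}\int_\delta^{2\delta}r^{n-1}u_r^2\,dr$. For $n=2$ this tends to $0$, even along a sequence, only if $\liminf_{r\to0}|u_r(r)|=0$, which is essentially the gradient bound you are trying to prove. A priori you only know $r^{n-1}u_r\to0$ and $\int_0^1|u_r|\,dr\le1$.

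Two ingredients repair this.
\begin{itemize}
\item First, $\int_0^1 r^{n-1}u_r^2\,dr\le|u_r(1)|\int_0^1|u_r|\,dr\le|u_r(1)|$, by monotonicity of $r^{n-1}|u_r|$ and $0\le u\le1$. This is the paper's $u\in W^{1,2}(B_1)$, which your proposal never establishes.
\item Second, on $(0,\varepsilon)$ use the ramp $\eta=\varepsilon^{-\alpha-1}r$ instead of a constant. Replacing $\eta$ by $\eta\phi_\delta$ then changes the quadratic form by $\lesssim\varepsilon^{-2\alpha-2}\int_0^{2\delta}r^{n-1}u_r^2\,dr\to0$. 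On $(0,\varepsilon)$ the integrand $u_r^2\bigl(|\nabla\eta|^2-(n-1)\eta^2/r^2\bigr)$ equals $(2-n)\varepsilon^{-2\alpha-2}u_r^2\le0$, which has the right sign precisely because $n\ge2$.
\end{itemize}
This gives $(n-1-\alpha^2)\int_\varepsilon^{1/2}u_r^2r^{n-3-2\alpha}\,dr\le C_nu_r(1)^2$ uniformly in $\varepsilon$, and the rest of your argument then goes through. Compared with the paper, your route is self-contained and avoids the sharp pointwise lemma. The price is that it gives only integral, not pointwise, decay of $u_r$.

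\textbf{Nondecreasing $f$.} The final claim is not proved by your sketch. You do not exhibit an inequality bounding $\int f(u)$, and the tools you name are not the relevant ones. Testing with $u$ gives $\int|\nabla u|^2=\int uf(u)$, which says nothing near $\partial B_1$. Without convexity, stability does not turn $f'(u)$ into $f(u)$.

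No stability is needed. Since $f(u(r))$ is nonincreasing in $r$, its averages over $B_r$ are nonincreasing in $r$. Because $r^{n-1}|u_r(r)|=\int_0^r s^{n-1}f(u(s))\,ds$, this says exactly $|u_r(r)|\ge r|u_r(1)|$, whence $1\ge\int_0^1|u_r|\,dr\ge\frac12|u_r(1)|$. The paper phrases this as concavity of $\Psi(t)=-nt^{(n-1)/n}u_r(t^{1/n})$.

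Your remark about comparing $B_{1/2}$ with the annulus and using $u\le1$ can be turned into a cruder version of this. Indeed $\int_{B_1}f(u)\le2^n\int_{B_{1/2}}f(u)$ gives $|u_r|\ge2^{-n}|u_r(1)|$ on $[1/2,1]$, hence $|u_r(1)|\le2^{n+1}$. That is the whole argument.
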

We now provide some remarks on the main results.
\begin{rem}
\rm
(i) The dimension $n \leq 6$ in Theorem \ref{thm} is optimal: for $n \geq 7$ and $f(t) = \frac{n-1}{1-t}$, the function $1 - |x|$ is always stable but not regular. Additional examples can be found in Bruera and Cabr\'e \cite{bc}.

(ii) When dimension
$3\le n\le 6$, $F(1)=+\infty$ is necessary for regular stable solutions to exist for certain nonlinearities $f$; counterexamples can be found in  Bruera and Cabr\'e \cite{bc}. Indeed, Bruera and Cabr\'e \cite{bc}
constructed a singular stable solution $u(x)=1-|x|^{2/(1+p)}$ for $n\ge 3$ and for some
 $p\in (0,1)$ satisfying
$$-\bdz u=\frac{2}{1+p}\left(\frac{2}{1+p}+n-2\right)(1-u)^{-p}$$
with $F(1)<+\fz$. For dimension $n=2$ (without assuming $F(1)=+\infty$), Luo, Ye and Zhou \cite{lyz} showed that the stable radial solution $u$ of $-\bdz u+ c(x)\cdot \nabla u=f(u)$ is regular if $f$ is nonnegative, nondecreasing, and convex, where $c(x)$ is smooth vector function. Observe that when $c(x)$ is a zero vector, this reduces to the case $-\Delta u = f(u)$.

(iii) In Theorem \ref{thm} above, with slight modifications, it can be shown that the stable radial  solution is regular under the conditions $0\le u\le 1$, $0\le f$  and $F(1)=+\fz$,
without requiring the boundary condition: $u=0$ on $\partial B_1$.
\end{rem}

Let us now review some relevant work in this direction. In the non-radial setting with a nonlinearity $f(u)$ that is unbounded as
$u\to +\fz$, a pioneering study was first conducted by Crandall and Rabinowitz \cite{cr75}
 with
$f(t)=e^t$ and $f(t)=(1+t)^p$, for $p>1$. Subsequently, the boundedness of stable solutions
$u$ to $-\bdz u=f(u)$ in general domains $\Omega$ has been extensively investigated over the past three decades; see \cite{cc06,c10,n20,v13}. Notably, Cabr\'e, Figalli, Ros-Oton and Serra \cite{cfrs} made a breakthrough:
Suppose $f$ is locally Lipschitz, nonnegative, and nondecreasing, and $u\in W^{1,2}_0(\Omega)$
is stable solution of $-\bdz u=f(u)$ in $C^3$ domain $\Omega$, they showed that
$$\|u\|_{C^{0,\az}(\overline \Omega)}\le C(n)\|u\|_{L^\fz(\Omega)}\quad
2\le n\le 9$$
for some $\az=\az(n)\in (0,1)$. They also established the local H\"older continuity of
$u$ for $n\le 9$. Note that the dimension $n\le 9$ is optimal for nonlinearities
$f(u)$ that blow-up as $u\to \fz$. In addition, Cabr\'e provided a quantitative proof of these results in \cite{c23,c24}.
We also mention that, as established in
the work of Erneta \cite{e23},
the optimal boundary regularity of $u$ in $C^3$ domains attainable already in the class of
$C^{1,1}$-domains.

Recently, inspired by a key stability inequality established \cite{cfrs},
Bruera and Cabr\'e \cite{bc} investigated the case of a nonlinearity $f(u)$
that blows up as $u\to 1^{-}$. Specifically, under the  following Crandall-Rabinowitz type condition:
\begin{align}\label{gz}
\gz:=\liminf_{t\to 1^{-}}\frac{f(t)f''(t)}{f'(t)^2}>1
\end{align}
for all  $f\in C^2([0,1))$ satisfying $f\ge 0$,
$f'\ge 0$ and $F(1)=+\fz$, they established the following local estimate for all stable solutions
(not necessarily radial) up to optimal dimension:
$$\|u\|_{L^{\fz}(B_{1/2})}\le  F^{-1}(C\|u\|^2_{L^1(B_1)})<1\quad
\forall \,  2\le n\le 6,$$
where constant $C$ depending only on $n$ and $\gz$. Additionally, a forthcoming study by Figalli and Franceschini \cite{ff} provides an explicit upper bound for the Hausdorff dimension of the singular set for stable solutions expressed in terms of $f$, $f'$, and $f''$. The result covers both globally defined nonlinearities and nonlinearities with finite blow-up.

On the other hand, for smooth domains $\Omega$, Bruera and Cabr\'e \cite{bc} also derived global estimates for stable solutions
$u\in C^2( \Omega)\cap C^0(\overline \Omega)$  of the Dirichlet boundary value problem:
\begin{align*}
\left\{
\begin{aligned}
-\bdz u & = f(u)&\quad{\rm in}&\quad \Omega  \\
u& =0 \quad&{\rm on}&\quad \partial \Omega,\\
\end{aligned}
\right.
\end{align*}
and showed the following:
\begin{itemize}
\item[$\bullet$] If $n\le 2$, then $\|u\|_{L^\fz(\Omega)}<1$ without condition \eqref{gz}.

\item[$\bullet$] If $3\le n\le 6$ and $f$ satisfies the condition \eqref{gz}, then $\|u\|_{L^\fz(\Omega)}<1$.

\end{itemize}
It is worth noting that additional partial results related to this direction can be found in
\cite{ces,cgg,lyz,m04}. In particular, for dimension $n=2$ (without requiring $F(1)=+\fz$), Luo, Ye and Zhou \cite{lyz} proved that the stable radial solution $u$ of $-\bdz u+c(x)\cdot \nabla u=f(u)$
 is regular provided that $f$ is nonnegative, nondecreasing, and convex.  Motivated by these findings, Bruera and Cabr\'e \cite{bc} proposed the following open problem:
\begin{center}
{\bf Open problem}. Does regularity of stable solutions hold up to the optimal
dimension for singular nonlinearities under no Crandall-Rabinowitz type condition?
\end{center}
In the radial setting,
we provide a positive answer to this open problem in
Theorem \ref{thm}. However, the non-radial case remains open.

Theorem \ref{thm} has an important application to the Gelfand-type problem; see for example
in \cite{bv97,b03}. Precisely, let $f\in C^1([0,1))$ satisfy $f(0)>0$ and be nondecreasing, convex, and $F(1)=+\infty$.

Given a constant $\lz>0$ consider the nonlinear elliptic problem:
\begin{align}\label{g-dp}
\left\{
\begin{aligned}
-\bdz u & = \lz f(u)&\quad{\rm in}&\quad B_1  \\
u& =0 \quad&{\rm on}&\quad \partial B_1,\\
\end{aligned}
\right.
\end{align}
A fundamental question is to determine a  constant $\lz^{\star}>0$ such that the problem
\eqref{g-dp} has a unique $L^1$-solution $u^{\star}\in L^1(B_1)$ in the sense that
$$-\int_{B_1}u^{\star}\bdz \xi \,dx=
\lz^{\star}\int_{B_1}f(u^{\star})\xi\,dx\quad\forall \,  \xi\in C^{0,1}_c(B_1)$$
Such $L^1$-solution solution $u^{\star}$ is called extremal solution.
The uniqueness and existence of extremal solution were established by
Castorina, Esposito and Sciunzi \cite{ces}. Moreover, they
showed that  for every $0<\lz<\lz^{\star}$, there exist a stable solution
$u_{\lz}\in C^2(\overline B_1)$ such that
$$\lim_{\lz\to \lz^{\star}}u_{\lz}
=u^{\star}.$$
Combining this result with Theorem \ref{thm}, we conclude that the extremal solution is always regular in dimensions $2\le n\le 6$.

\begin{thm}\label{thm-2}
Let $f\in C^1([0,1))$ be nondecreasing, convex, and
$F(1)=+\fz$. Suppose that $0\le u^{\star}\le 1$ is the extremal solution to
\eqref{g-dp}. Then
$$\|u^{\star}\|_{L^\fz(B_1)}<1\quad \forall \, 2\le n\le 6.$$

\end{thm}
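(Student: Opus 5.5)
The plan is to deduce Theorem \ref{thm-2} from Theorem \ref{thm}, using the approximation result of Castorina, Esposito and Sciunzi \cite{ces}. That result gives, for every $0<\lz<\lz^{\star}$, a stable classical solution $u_\lz\in C^2(\overline B_1)$ of \eqref{g-dp}, with $u_\lz\to u^{\star}$ as $\lz\to\lz^{\star}$.

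\textbf{Step 1: each $u_\lz$ satisfies the hypotheses of Theorem \ref{thm}.} The problem \eqref{g-dp} is rotation invariant, and the minimal solutions are unique. Hence each $u_\lz$ is radial; alternatively, by Gidas--Ni--Nirenberg, $u_\lz$ is radially decreasing because $f\ge 0$. Set $f_\lz:=\lz f$. Then $f_\lz\ge0$, $f_\lz$ is nondecreasing, and $\int_0^1 f_\lz=+\fz$. The inequality $0\le u_\lz\le 1$ follows from the maximum principle together with the fact that $u_\lz$ is a classical solution with values in $[0,1)$. Stability of $u_\lz$ for \eqref{g-dp} means
$$\int_{B_1}\lz f'(u_\lz)\xi^2\,dx\le\int_{B_1}|\nabla\xi|^2\,dx$$
for all $\xi\in C^\fz_c(B_1)$, so in particular for all $\xi$ supported in $B_1\setminus\{0\}$. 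Thus $u_\lz$ is a stable radial solution of \eqref{d-p} with nonlinearity $f_\lz$.

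\textbf{Step 2: a bound uniform in $\lz$.} Since $f_\lz$ is nondecreasing, the last part of Theorem \ref{thm} gives
$$\lz F(\|u_\lz\|_{L^\fz(B_1)})\le C,$$
where $C$ is universal and independent of $\lz$. This step is the main obstacle: we must ensure that $C$ really does not depend on the nonlinearity, that is, on $\lz$ or $f$. If it does depend on $f$, we use instead the bound $F_\lz(\|u_\lz\|_\fz)\le C u_{\lz,r}(1)^2$. We then control $|u_{\lz,r}(1)|$ uniformly via the divergence theorem:
$$|\partial B_1|\,|u_{\lz,r}(1)|=\lz\int_{B_1}f(u_\lz)\,dx\le \lz^\star\int_{B_1}f(u^\star)\,dx<\fz.$$
Here we use the monotonicity of $u_\lz$ in $\lz$, which holds for minimal solutions, and $f(u^\star)\in L^1$, which is part of the definition of the $L^1$ extremal solution. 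Either route yields
$$F(\|u_\lz\|_\fz)\le C/\lz\le 2C/\lz^\star \quad\text{for } \lz\ge\lz^\star/2.$$

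\textbf{Step 3: passage to the limit.} Because $F(1)=+\fz$ and $F$ is continuous and increasing, there is $\delta<1$ with $F(\delta)>2C/\lz^\star$. Hence $\|u_\lz\|_{L^\fz(B_1)}\le\delta$ for all $\lz\in[\lz^\star/2,\lz^\star)$. The convergence $u_\lz\to u^\star$ is monotone increasing, so it is pointwise a.e. Therefore $u^\star\le\delta$ a.e., which gives $\|u^{\star}\|_{L^\fz(B_1)}\le\delta<1$ for $2\le n\le 6$.
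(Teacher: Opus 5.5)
Your proof is correct, and it takes a genuinely different and shorter route than the paper's.

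The paper uses Theorem~\ref{thm} only through the inequality $F(\|u_\lz\|_{L^\fz(B_1)})\le C(n)\,u_{\lz,r}(1)^2$. It then bounds $u_{\lz,r}(1)^2$ by $C(n)\int_{1/2}^{3/4}u_{\lz,r}(t)^2\,dt$, using a lemma of Villegas and the monotonicity of $t^{2n-2}u_r^2$. Finally it passes to the limit using $\nabla u_\lz\to\nabla u^{\star}$ in $L^2(B_{3/4})$. This needs an extra convergence result for gradients, and it ends with a bound on $F(\|u^{\star}\|_{L^\fz(B_1)})$ in terms of the energy of $u^{\star}$ on an annulus.

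You instead apply the universal part of Theorem~\ref{thm} to the nonlinearity $\lz f$, and you keep track of the factor $\lz$, which the paper leaves implicit. This gives $\lz F(\|u_\lz\|_{L^\fz(B_1)})\le C$. As a result you only need the monotone pointwise convergence $u_\lz\uparrow u^{\star}$, and you obtain the explicit estimate $\lz^{\star}F(\|u^{\star}\|_{L^\fz(B_1)})\le C$.

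The concern you raise in Step 2 is unfounded. The constant really is independent of $f$ and $\lz$: Lemma~\ref{un-bd} gives $|u_r(1)|\le 2$ for every nonnegative nondecreasing nonlinearity (its proof does not even use stability), and $M_n$ depends only on $n$.

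Your fallback route is therefore superfluous, and it is the weakest part of the write-up. The fact $f(u^{\star})\in L^1(B_1)$ is not literally part of the paper's definition of $L^1$-solution, which only tests against compactly supported $\xi$, so it would need a separate argument. If you want a fallback, use $|u_{\lz,r}(1)|\le 2$ from Lemma~\ref{un-bd} instead.
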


Finally, we outline the main ideas for proving the theorems.
Since Theorem \ref{thm-2} follows directly from Theorem \ref{thm},
it suffices to explain the proof strategy for Theorem \ref{thm}.

A key starting point is the following observation for general radial solutions:
$$
F(\|u\|_{L^\fz(B_1)})= (n-1)\int^1_0\frac{u_r(t)^2}{t}\,dt+\frac{u_r(1)^2}2.
$$
Since $u\in C^1(\overline B_1\backslash\{0\})$,
the main task is reduced to bounding the term $\int^1_0\frac{u_r(t)^2}{t}\,dt$. By a key
estimate due to Villegas \cite[Theorem 1.7]{v12} (see also \cite{cc06}), we deduce
\begin{align*}
\vert u_r(t)\vert \le C_n \vert u_r(1)\vert t^{-n/2+\sqrt{n-1}+1},
\quad \forall \, 0<t\leq 1.
\end{align*}
Observe that
$$-n/2+\sqrt{n-1}+1>0\quad{\rm if \ and\ only\ if}\quad 2\le n\le 6.$$
This implies that $u_r(t)$ is H\"older continuous near  $t=0$ and hence
the behavior of
 $t^{-1}u_r(t)^2$ is like  $t^{-\az}$ for some
$\az\in (0,1)$ whenever $2\le n\le 6$. Therefore, the integral
$\int^1_0\frac{u_r(t)^2}{t}\,dt$ is finite, and we conclude

$$F(\|u\|_{L^\fz(B_1)})\leq C_nu_r(1)^2<+\fz.$$
Combining this with the condition $F(1)=\fz$, we obtain $\|u\|_{L^\fz(B_1)}<1$.
Moreover, if $f$ is nondecreasing, the universal bound on $F(\|u\|_{L^\fz(B_1)})$ follows
from the upper bound on $|u_r(1)|$; see Lemma \ref{un-bd} for details.

\section{Proof of the main result}
In this section, we begin with following key property for all radial solutions
$u$ to \eqref{d-p}
\begin{prop}\label{key-prop}
Let $n\geq 2$, and  $ f\in C^1([0,1))$ satisfying $f\ge 0$ and
$F(1)=+\fz$. Suppose that  $0\le u\le 1$ is a radial solution (not necessarily stable) of \eqref{d-p}. Then

\begin{align}\label{key-prop1}
F(\|u\|_{L^\fz(B_1)})= (n-1)\int^1_0\frac{u_r(t)^2}{t}\,dt+\frac{u_r(1)^2}2.
\end{align}
\end{prop}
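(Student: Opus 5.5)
The identity is a first integral (Pohozaev/energy type) of the radial ODE
\[
u_{rr}+\frac{n-1}{r}u_r=-f(u),\qquad 0<r\le 1,
\]
so we plan to multiply by $u_r$ and integrate from $0$ to $1$. Since $f\ge 0$, we have $(r^{n-1}u_r)'=-r^{n-1}f(u)\le 0$. Hence $r^{n-1}u_r$ is nonincreasing on $(0,1]$.

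The first step is monotonicity of $u$. If $r^{n-1}u_r(r_0)>0$ for some $r_0$, then $r^{n-1}u_r\ge c>0$ on $(0,r_0]$. This gives $u_r\ge c r^{1-n}$, which is not integrable at $0$ for $n\ge2$ and forces $u\to-\infty$ as $r\to0^+$, contradicting $u\ge0$. Therefore $u_r\le0$ on $(0,1]$, $u$ is radially nonincreasing, and
\[
\|u\|_{L^\fz(B_1)}=\lim_{r\to0^+}u(r)=:u(0)\in[0,1],
\]
with the limit existing by monotonicity.

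The second step is the computation. On $[\epsilon,1]$ we multiply the ODE by $u_r$:
\[
\frac{d}{dr}\Big(\frac{u_r^2}{2}\Big)+\frac{n-1}{r}u_r^2=-\frac{d}{dr}F(u(r)).
\]
Note that $u(r)<1$ for $r>0$ wherever the ODE holds classically, so $F(u(r))$ is finite there. Integrating from $\epsilon$ to $1$ and using $u(1)=0$, $F(0)=0$, we get
\[
F(u(\epsilon))=\frac{u_r(1)^2}{2}-\frac{u_r(\epsilon)^2}{2}+(n-1)\int_\epsilon^1\frac{u_r^2}{t}\,dt .
\]

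The third and main step is the limit $\epsilon\to0^+$. As $\epsilon\to0^+$, $F(u(\epsilon))\to F(u(0))$ by monotone convergence, with the value $+\infty$ allowed, and the integral increases to $\int_0^1 u_r^2/t\,dt$. The obstacle is the term $u_r(\epsilon)^2$. The identity itself shows that $F(u(\epsilon))+\tfrac12 u_r(\epsilon)^2$ is monotone in $\epsilon$, hence has a limit $L\in[0,\infty]$. We need this boundary term to vanish.

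If $\liminf_{\epsilon\to0} u_r(\epsilon)^2=0$ along some sequence, we pass to the limit along that sequence and are done. This is where we would argue, and we treat it as the delicate point. Suppose instead $|u_r|\ge c>0$ near $0$. Then $\int_0 u_r^2/t=\infty$, so $L=\infty$. In that case we want to show $F(u(0))=\infty$, i.e.\ both sides equal $+\infty$; we interpret \eqref{key-prop1} in $[0,\infty]$. If $F(u(\epsilon))$ stayed bounded, then $u_r(\epsilon)^2\to\infty$. But $u$ is bounded and monotone, so $\int_0^1|u_r|\,dt=u(0)\le1$. We would then try to contradict this using the equation: with $F(u)$ bounded, $u(0)<1$ and $f(u)$ is bounded, so $|r^{n-1}u_r|\le Cr^n/n$, i.e.\ $|u_r|\le Cr$. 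That contradicts $u_r^2\to\infty$.

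Applying the same bound in general settles everything. Either $u(0)=1$, in which case $F(u(0))=\infty$ by the assumption $F(1)=\infty$ and the identity forces the right side to be infinite as well, because the left side diverges along $\epsilon\to0$ while $-u_r(\epsilon)^2/2\le0$. Or $u(0)<1$, in which case $f(u)$ is bounded on $B_1$, $|u_r(\epsilon)|\le C\epsilon\to0$, and the limit is exact. This dichotomy is the cleanest route, and the only subtle step is the estimate $|u_r|\le Cr$, obtained by integrating $(r^{n-1}u_r)'$, which needs $r^{n-1}u_r\to0$ as $r\to0$. We would justify that by combining monotonicity of $r^{n-1}u_r$ with the integrability of $u_r$.
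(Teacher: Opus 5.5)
Your proposal is correct and follows essentially the same route as the paper. You multiply the radial equation by $u_r$, integrate from a small radius to $1$, and split into two cases: the regular case, where the boundary term $u_r(\epsilon)^2$ vanishes, and the non-regular case, where $F(1)=+\infty$ forces $\int_0^1 u_r^2/t\,dt=+\infty$. Your bound $|u_r(r)|\le \frac{r}{n}\max_{[0,u(0)]}f$, obtained from $r^{n-1}u_r\to 0$ (the paper's Proposition~\ref{w12}(ii)), also supplies the justification for the step ``if $u$ is regular then $\lim_{t\to0}u_r(t)=0$,'' which the paper states without proof.
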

\begin{proof}
Note that $f\ge 0$ and hence $u(t)$ is strictly decreasing with
respect $t\in (0,1]$ (see Proposition \ref{w12} below). Then $u(t)$ attains its maximum at point $t=0$ and
hence $u\in C^2(B_1\backslash\{0\})$.  First we observe that
$$f(u(t))=-\bdz u=-\left(u_{rr}(t)+\frac {(n-1)}r u_r(t)\right) \quad \forall \,  0<t<1.$$
For all $0<\ez<\|u\|_{L^\fz(B_1)}$, making the change of variable
$s=u(t)$ we obtain
\begin{align*}
F(\|u\|_{L^\fz(B_1)}-\ez)&=\int^{\|u\|_{L^\fz(B_1)}-\ez}_0f(s)\,ds
=\int^{\eta_{\ez}}_1f(u(t))u_r(t)\,dt\\
&=\int^{1}_{\eta_{\ez}}
\left(u_{rr}(t)+\frac {(n-1)}r u_r(t)\right)u_r(t)\,dt\\
&=\frac 12[u_r(1)^2-u_r(\eta_{\ez})^2]
+(n-1)\int^1_{\eta_{\ez}}\frac{u_r(t)^2}{t}\,dt,
\end{align*}
where $\eta_{\ez}\in (0,1)$ and $\lim_{\ez\to 0}\eta_{\ez}=0$.

Therefore, if $u$ is regular then $\lim_{t\to 0} u_r(t)=0$, and \eqref{key-prop1} follows by letting $\ez\to 0$.

Otherwise, if $u$ is not regular, taking into account that
$$F(\|u\|_{L^\fz(B_1)}-\ez)\leq \frac 12 u_r(1)^2
+(n-1)\int^1_{\eta_{\ez}}\frac{u_r(t)^2}{t}\,dt,$$
and letting $\ez\to 0$, we obtain $\int^1_0\frac{u_r(t)^2}{t}\,dt=+\infty$, which also implies \eqref{key-prop1} in this case.

\end{proof}

In order to show Theorem \ref{thm}, we recall the following important result concerning stable radial solutions, which was established by Villegas \cite[Theorem 1.7]{v12}.
\begin{lem}\label{vi-lem}
Let $n\geq 2$ and $f:[0,1)\to [0,+\fz)$ be  a $C^1$ function satisfying $f\ge 0$. Let $0\le u\le 1$ be a stable radial  solution to
\eqref{d-p} in $W^{1,2}(B_1)$. Then there exists a constant $K_n$ depending only on $n$ such that
\begin{align}\label{vi-le1}
|u_r(t)|\le K_n \|\nabla u\|_{L^2(B_1\backslash B_{1/2})}t^{-n/2+\sqrt{n-1}+1},
\quad \forall \,  0<t<1/2.
\end{align}
\end{lem}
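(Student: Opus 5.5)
This is Villegas' estimate, and I would reprove it by the standard radial test-function argument (as in Cabr\'e--Capella and Villegas). Write $u=u(r)$. Since $f\ge 0$, $u_r<0$ on $(0,1]$. Differentiating the radial equation $-u_{rr}-\frac{n-1}{r}u_r=f(u)$ in $r$ shows that $w:=u_r$ satisfies
\begin{align*}
-\Delta w+\frac{n-1}{r^2}w=f'(u)w \quad \text{in } B_1\setminus\{0\}.
\end{align*}
The first step is to test the stability inequality with $\xi=u_r\,\eta$, where $\eta$ is radial and has compact support in $B_1\setminus\{0\}$. Integrating by parts against this equation for $w$ gives the classical inequality
\begin{align*}
(n-1)\int_{B_1}\frac{u_r^2}{r^2}\eta^2\,dx\le \int_{B_1}u_r^2|\nabla\eta|^2\,dx .
\end{align*}
A standard approximation then lets $\eta$ be Lipschitz with $\eta\equiv$ const near $0$. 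This is justified because $u\in W^{1,2}$, and because near the origin a cutoff $\eta_\epsilon$ contributes a term that vanishes in the limit when $n\ge 2$, provided we first treat $\eta$ supported away from $0$ and then use monotone convergence.

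The second step is to choose the test function. For $0<t<1/2$, I would take $\eta(r)=t^{-\alpha}$ for $r\le t$, $\eta(r)=r^{-\alpha}$ for $t\le r\le 1/2$, and a linear cutoff to $0$ on $[1/2,1]$, with $\alpha=\sqrt{n-1}$ chosen so that the weights $(n-1)r^{-2}\eta^2$ and $|\eta'|^2$ cancel exactly on $(t,1/2)$. In radial form, the inequality then reduces to
\begin{align*}
(n-1)t^{-2\alpha}\int_0^t u_r^2 r^{n-3}\,dr\le C_n\int_{1/2}^1 u_r^2 r^{n-1}\,dr\le C_n\|\nabla u\|^2_{L^2(B_1\setminus B_{1/2})}.
\end{align*}
Strictly, the cancellation only holds for $\alpha<\sqrt{n-1}$ when boundary terms are handled. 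If exact equality causes trouble, I would use $\alpha=\sqrt{n-1}-\delta$ together with the monotonicity argument below, or keep $\alpha=\sqrt{n-1}$, since the middle terms cancel identically and leave only the cutoff contribution.

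The third step converts this integral bound into a pointwise one using monotonicity. Because $f\ge0$, we have $(r^{n-1}u_r)'=-r^{n-1}f(u)\le 0$, so $r^{n-1}|u_r|$ is nondecreasing. Hence for $r\in(t/2,t)$ we get $|u_r(r)|\ge (r/t)^{n-1}|u_r(t)|\ge 2^{1-n}|u_r(t)|$. Therefore
\begin{align*}
\int_0^t u_r^2r^{n-3}\,dr\ge c_n|u_r(t)|^2t^{n-2},
\end{align*}
and combining this with the previous step yields $|u_r(t)|^2\le C_n t^{2\alpha-n+2}\|\nabla u\|^2_{L^2(B_1\setminus B_{1/2})}$. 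This gives exactly the exponent $-n/2+\sqrt{n-1}+1$ claimed in \eqref{vi-le1}.

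I expect the main obstacle to be justifying the test function $u_r\eta$ near the origin when $u$ may be singular there. For this I would first work with $\eta$ vanishing on $B_\epsilon$ (for example $\eta$ linear from $0$ to $t^{-\alpha}$ on $(\epsilon,2\epsilon)$). The extra gradient term is then bounded by $\epsilon^{-2}\int_\epsilon^{2\epsilon}u_r^2r^{n-1}\,dr\,t^{-2\alpha}$, and I would show it is absorbed into the left-hand side, up to a constant factor, using the monotonicity of $r^{n-1}|u_r|$. Letting $\epsilon\to0$ with Fatou's lemma then finishes the argument.
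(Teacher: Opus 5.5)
The paper does not prove this lemma; it quotes it from Villegas (Theorem 1.7 of the 2012 paper). Your strategy is the standard one: the Cabr\'e--Capella inequality, a test function equal to $t^{-\alpha}$ on $(0,t)$ and to $r^{-\sqrt{n-1}}$ on $(t,1/2)$, and then monotonicity of $r^{n-1}|u_r|$. However, the step you flag as the main obstacle does not close the way you describe.

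\textbf{The origin.} Take your cutoff: $\eta=0$ on $(0,\epsilon)$, rising linearly to $c=t^{-\alpha}$ on $(\epsilon,2\epsilon)$. Set $J(s)=\int_s^t u_r^2r^{n-3}\,dr$. The extra term is $c^2\epsilon^{-2}\int_\epsilon^{2\epsilon}u_r^2r^{n-1}\,dr\le 4c^2\big(J(\epsilon)-J(2\epsilon)\big)$. Near $r=\epsilon$ you have $\eta\to 0$ but $|\eta'|=c/\epsilon$, so the left side does not dominate this term there. All you obtain is $(n-1)J(2\epsilon)\le 4\big(J(\epsilon)-J(2\epsilon)\big)+M$, which bounds $J(\epsilon)$ from below, not from above.

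Monotonicity of $r^{n-1}|u_r|$ cannot fix this. It only compares $u_r$ on adjacent dyadic shells with constants $C_n\ge 1$, and a priori $J(0)$ may be infinite. For example, with $n=2$ and $u_r\equiv -1$ near $0$, $u$ is bounded, lies in $W^{1,2}$, and $r|u_r|$ is nondecreasing, yet $\int_0 u_r^2r^{-1}\,dr=\infty$. So Fatou has no uniform bound to pass through.

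\textbf{The fix.} Cut off linearly from the origin: $\eta_\epsilon=\eta\,\min(r/\epsilon,1)$.
\begin{itemize}
\item It is admissible because $\eta_\epsilon=O(r)$. A further cutoff at scale $\delta\ll\epsilon$ then costs an error of order $\epsilon^{-2}\int_\delta^{2\delta}u_r^2r^{n-1}\,dr$, which tends to $0$ because $u\in W^{1,2}$. This is exactly where that hypothesis enters.
\item On $(0,\epsilon)$ one has $|\eta_\epsilon'|^2=c^2\epsilon^{-2}\le (n-1)c^2\epsilon^{-2}=(n-1)\eta_\epsilon^2/r^2$, since $n\ge 2$. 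So the inner gradient term is absorbed pointwise.
\item This gives $(n-1)c^2J(\epsilon)\le C_n\|\nabla u\|^2_{L^2(B_1\setminus B_{1/2})}$ uniformly in $\epsilon$, and monotone convergence finishes.
\end{itemize}

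\textbf{Step 3 uses the monotonicity backwards.} Since $r^{n-1}|u_r|$ is nondecreasing, for $r<t$ it gives only the upper bound $|u_r(r)|\le (t/r)^{n-1}|u_r(t)|$. The lower bound $|u_r(r)|\ge (r/t)^{n-1}|u_r(t)|$ on $(t/2,t)$ does not follow, since $|u_r|$ can drop sharply below $t$ where $f(u)$ is large. Integrate instead over $(t,2t)$, where $|u_r(r)|\ge (t/r)^{n-1}|u_r(t)|\ge 2^{1-n}|u_r(t)|$:
\begin{itemize}
\item For $t<1/4$, run Step 2 with the plateau extended to $2t$.
\item For $1/4\le t<1/2$, use directly $t^{n-1}|u_r(t)|\le r^{n-1}|u_r(r)|$ for $r\in(1/2,1)$ and average.
\end{itemize}

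\textbf{Minor points.} Put the outer cutoff on $[1/2,3/4]$ so that $u_r\eta$ has compact support in $B_1$. You can also drop the hedge about $\alpha<\sqrt{n-1}$: on $[t,1/2]$ both integrals are finite and cancel exactly for $\alpha=\sqrt{n-1}$.
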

Before applying Lemma \ref{vi-lem}, we need to verify that
$u$ belongs to $u\in W^{1,2}(B_1)$. For this purpose, we state the following proposition.
\begin{prop}\label{w12}
Let $n\ge 2$ and $f:[0,1)\to[0,+\infty)$ be a $C^{1}$ function satisfying $f\ge 0$. Suppose that $0\leq u\leq 1$ is a classical nontrivial $C^{2}$  radial solution (not necessarily stable) of  $-\bdz u = f(u) \quad{\rm in} \quad B_1\backslash\{0\} $. Then

\begin{itemize}
    \item[(i)] $u_{r}(r)<0$ in $(0,1)$.
    \item[(ii)] $\displaystyle\lim_{r\to 0^+}r^{n-1}u_{r}(r)=0$.
    \item[(iii)] $u\in W^{1,2}(B_{1})$.
\end{itemize}
\end{prop}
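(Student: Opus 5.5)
The plan is to work entirely with the radial ODE in divergence form,
$$-(r^{n-1}u_r)'=r^{n-1}f(u(r))\ge 0,\qquad 0<r<1,$$
so that $r\mapsto r^{n-1}u_r(r)$ is nonincreasing on $(0,1)$. From this single monotonicity fact I would derive (ii) first, then (i), then (iii).

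\textbf{Existence of the limit in (ii).} Since $r^{n-1}u_r$ is monotone, the limit $L:=\lim_{r\to0^+}r^{n-1}u_r(r)$ exists in $(-\infty,+\infty]$.

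\textbf{Ruling out $L>0$.} Suppose $L>0$ (including $L=+\infty$). Then there is $\delta>0$ with $u_r(r)\ge c\,r^{1-n}$ on $(0,\delta)$ for some $c>0$. Integrating from $r$ to $\delta$ gives
$$u(\delta)-u(r)\ge c\int_r^\delta s^{1-n}\,ds,$$
and the right-hand side tends to $+\infty$ as $r\to0$ when $n\ge2$: it grows like $\log(1/r)$ for $n=2$ and like $r^{2-n}$ for $n\ge3$. This contradicts $0\le u\le1$.

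\textbf{Ruling out $L<0$.} Suppose $L<0$. Then $u_r\le -c\,r^{1-n}$ near $0$, and the same integration gives $u(r)-u(\delta)\ge c\int_r^\delta s^{1-n}\,ds\to+\infty$, again contradicting boundedness. Hence $L=0$, which proves (ii). This is the step I expect to need the most care: the whole argument rests on the limit existing by monotonicity, which holds even though $u$ is only assumed to be $C^2$ away from the origin.

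\textbf{Proof of (i).} With $L=0$ and $r^{n-1}u_r$ nonincreasing, we get $r^{n-1}u_r(r)\le 0$, so $u_r\le0$ on $(0,1)$. More precisely,
$$r^{n-1}u_r(r)=-\int_0^r s^{n-1}f(u(s))\,ds.$$
Strict negativity at a point $r_0$ fails only if $f(u)\equiv0$ on $(0,r_0)$. In that case $u_r\equiv0$ on $(0,r_0)$, so $u\equiv u(0^+)=:a$ there, and $f(a)=0$.

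\emph{Excluding this degenerate case.} I would let $r_1$ be the supremum of the set where $u_r=0$ on $(0,r_1)$ and use ODE uniqueness: $f$ is $C^1$, hence locally Lipschitz on $[0,1)$. Since $u\equiv a<1$ there (if $a=1$ then $f$ is not defined at $u$, impossible for a classical solution), the constant solution $a$ is the unique solution with data $u(r_0)=a$, $u_r(r_0)=0$. Therefore $u\equiv a$ on all of $(0,1)$. The boundary condition $u(1)=0$ forces $a=0$, so $u\equiv0$, contradicting nontriviality. (Even without the boundary condition, a constant is excluded by the word ``nontrivial''.) Thus $u_r<0$ on $(0,1)$.

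\textbf{Proof of (iii).} Because $0\le u\le1$, we have $u\in L^2(B_1)$. For the gradient, I would multiply the equation by $u$ and integrate over the annulus $B_1\setminus B_\varepsilon$:
$$\int_{B_1\setminus B_\varepsilon}|\nabla u|^2\,dx=\int_{B_1\setminus B_\varepsilon}f(u)\,u\,dx+|\partial B_1|\,u(1)u_r(1)-|\partial B_1|\,\varepsilon^{n-1}u(\varepsilon)u_r(\varepsilon).$$
This route has a problem: the term $\int f(u)u$ may not be controlled. Instead I would bound $\int_0^1 r^{n-1}u_r^2\,dr$ directly. Write $u_r^2=|u_r|\cdot|u_r|$; by (i) and the monotonicity of $r^{n-1}u_r$, the weight satisfies $r^{n-1}|u_r(r)|\le |u_r(1)|$. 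Hence
$$\int_0^1 r^{n-1}u_r^2\,dr\le |u_r(1)|\int_0^1|u_r|\,dr=|u_r(1)|\,\bigl(u(0^+)-u(1)\bigr)\le |u_r(1)|<\infty,$$
using $0\le u\le 1$.

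\textbf{Identifying the weak gradient.} Finally, the classical gradient on $B_1\setminus\{0\}$ is the weak gradient on $B_1$. A point has zero capacity for $n\ge2$, and $u$ is bounded, so a cutoff argument near $0$ shows the singularity at the origin contributes nothing. Together with the $L^2$ bound above, this gives $u\in W^{1,2}(B_1)$.
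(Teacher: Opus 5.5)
Your proposal is correct and follows essentially the paper's argument: (i) and (ii) come from the monotonicity of $r^{n-1}u_r$ together with the divergence of $\int_0 s^{1-n}\,ds$ for $n\ge 2$ (you simply prove (ii) first and then read off the sign), and (iii) is exactly the paper's estimate $\int_0^1 r^{n-1}u_r^2\,dr\le |u_r(1)|\int_0^1|u_r|\,dr\le |u_r(1)|$. Your ODE-uniqueness argument for strict negativity and your cutoff argument identifying the weak gradient across the origin fill in two points the paper leaves implicit (it only says ``using that $u$ is not constant'' and ``it remains to prove that $|\nabla u|\in L^2(B_1)$'').
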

\begin{proof}
(i) Suppose by contradiction that there exists $r_{0}\in(0,1)$ such that $u_{r}(r_{0})>0$. Since $f\geq 0$ we have that
$$
(r^{n-1}u_{r}(r))'=-r^{n-1}f(u(r))\leq 0\ \mbox{ in }(0,1).
$$
Hence $r^{n-1}u_{r}(r)\geq r_{0}^{n-1}u_{r}(r_{0}):=\alpha_{0}>0$. This implies that
$$
u_{r}(r)\geq\alpha_{0}\,r^{1-n}\quad\mbox{for every }r\in(0,r_{0}).
$$
Therefore
$$
u(r_{0})-u(\varepsilon)=\int_{\varepsilon}^{r_{0}}u_{r}(r)\,dr\geq\alpha_{0}\int_{\varepsilon}^{r_{0}}r^{1-n}\,dr,\quad\text{for every }\varepsilon\in(0,r_{0}).
$$
Letting $\varepsilon\to 0$ and taking into account the boundedness of $u$ we obtain a contradiction. Hence $u_{r}\leq 0$ in $(0,1)$ and using that $u$ is not constant we conclude $u_{r}<0$ in $(0,1)$.

\medskip

(ii) Since $-r^{n-1}u_{r}(r)$ is a positive nondecreasing function in $(0,1)$, we have that there exists $\displaystyle\lim_{r\to 0}-r^{n-1}u_{r}(r):=\beta_{0}\geq 0$.

Suppose by contradiction that $\beta_{0}>0$. Hence $-r^{n-1}u_{r}(r)\geq\beta_{0}$
 in $(0,1)$, obtaining a similar contradiction as in (i).

\medskip

(iii) Since $u$ is bounded, it remains to prove that $|\nabla u|\in L^{2}(B_{1})$. Using again that $-u_{r}>0$ and $-r^{n-1}u_{r}$ is nondecreasing we deduce
\begin{align*}
\int_{B_{1}}|\nabla u|^{2}\,dx
&=\int_{0}^{1}\omega_{n}r^{n-1}u_{r}(r)^{2}\,dr
 =\omega_{n}\int_{0}^{1}(-r^{n-1}u_{r}(r))(-u_{r}(r))\,dr\\
&\leq\omega_{n}\int_{0}^{1}(-u_{r}(1))(-u_{r}(r))\,dr
 \leq\omega_{n}|u_{r}(1)|.
\end{align*}

\end{proof}

As a direct consequence of Lemma \ref{vi-lem} and Proposition \ref{w12}, we have:
\begin{prop}\label{key-prop-2}
Let $n\geq 2$ and $f:[0,1)\to [0,+\fz)$ be  a $C^1$ function satisfying $f\geq 0$. Let $0\le u\le 1$ be a stable radial solution to
\eqref{d-p}. Then there exists a constant $C_n$ depending only on $n$ such that

\begin{align}\label{vi-le2}
\vert u_r(t)\vert \le C_n \vert u_r(1)\vert t^{-n/2+\sqrt{n-1}+1},
\quad \forall \, 0<t\leq 1.
\end{align}
\end{prop}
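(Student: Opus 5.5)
The plan is to combine Lemma \ref{vi-lem} with the monotonicity facts in Proposition \ref{w12}. There are three things to do: bound the $L^2$ norm of the gradient on the annulus by $|u_r(1)|$, treat the range $1/2\le t\le 1$ separately, and then merge the two ranges into one constant.

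\emph{Step 1: membership in $W^{1,2}$.} By Proposition \ref{w12}(iii), $u\in W^{1,2}(B_1)$, so Lemma \ref{vi-lem} applies. (If $u$ is trivial the statement is empty, so I assume $u$ is nontrivial.) Lemma \ref{vi-lem} then gives \eqref{vi-le1} for $0<t<1/2$.

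\emph{Step 2: bounding the annular gradient norm by $|u_r(1)|$.} By Proposition \ref{w12}(i),(ii), the function $-r^{n-1}u_r(r)$ is positive and nondecreasing on $(0,1]$. It extends continuously up to $r=1$, since $u\in C^1$ up to the boundary away from the origin. Hence for $1/2\le r\le 1$,
$$0<-u_r(r)\le r^{1-n}|u_r(1)|\le 2^{n-1}|u_r(1)|.$$
It follows that
$$\|\nabla u\|^2_{L^2(B_1\backslash B_{1/2})}=\omega_n\int_{1/2}^1 r^{n-1}u_r(r)^2\,dr\le \omega_n 4^{n-1}u_r(1)^2 .$$
Substituting this into \eqref{vi-le1} gives $|u_r(t)|\le C'_n|u_r(1)|\,t^{-n/2+\sqrt{n-1}+1}$ for $0<t<1/2$.

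\emph{Step 3: the range $1/2\le t\le 1$.} Here we already have $|u_r(t)|\le 2^{n-1}|u_r(1)|$ from Step 2, and we need to compare it with the power $t^{\beta}$, where $\beta:=-n/2+\sqrt{n-1}+1$. Since $t\in[1/2,1]$, we have $t^{\beta}\ge \min(1,2^{-\beta})$, whatever the sign of $\beta$. So $2^{n-1}\le 2^{n-1}\max(1,2^{\beta})\,t^{\beta}$, which gives the bound on this range as well. Taking $C_n$ to be the larger of the two constants yields \eqref{vi-le2} for all $0<t\le1$.

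The only delicate point is the regularity at $r=1$ used in Step 2, namely that $u_r(1)$ makes sense and the monotonicity of $-r^{n-1}u_r$ extends to $r=1$. This follows from $u\in C^2$ near the boundary, or by taking the limit $r\to1^-$. The rest is routine bookkeeping of constants.
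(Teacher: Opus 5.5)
Your proposal is correct and follows essentially the same route as the paper's proof. The shared steps are: Proposition \ref{w12}(iii) gives $W^{1,2}$ membership; the monotonicity of $r^{n-1}|u_r|$ bounds the annular Dirichlet energy by $u_r(1)^2$, which feeds into Lemma \ref{vi-lem} for $t<1/2$; and the same monotonicity handles $1/2\le t\le 1$.

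The only difference is in the constants. The paper keeps the factor $t^{1-n}$ and absorbs $t^{-n/2-\sqrt{n-1}}\le 2^{n/2+\sqrt{n-1}}$, while you use the uniform bound $2^{n-1}$ together with $\max(1,2^{\beta})$.
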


\begin{proof}
Since $t^{2n-2}u_r(t)^2$ is nondecreasing with respect to $t\in (0,1]$, then
we have
$$\|\nabla u\|^2_{L^2(B_1\backslash B_{1/2})}
=\omega_n\int^1_{1/2}t^{n-1}u_r(t)^2\,dt
\le \omega_n 2^{n-1}\int^1_{1/2}t^{2n-2}u_r(t)^2\,dt\le \omega_n \frac{2^{n-1}}{2}u_r(1)^2.$$

Using Lemma \ref{vi-lem} and taking into account that $u\in W^{1,2}(B_1)$,  we obtain \eqref{vi-le2} for $0<t<1/2$.

If $1/2\leq t\leq 1$, using again that $\vert u_r(t)\vert t^{n-1}$ is nondecreasing, it follows easily that

$$\vert u_r(t)\vert\leq \vert u_r(1)\vert t^{1-n}\leq  \vert u_r(1)\vert 2^{n/2+\sqrt{n-1}} t^{-n/2+\sqrt{n-1}+1},$$

\noindent which completes the proof.
\end{proof}
In addition, if $f$ is  nondecreasing, one can derive the universal bound
of $u_r(1)$, which is useful for establishing an upper bound of $F(\|u\|_{L^\fz(B_1)})$.
\begin{lem}\label{un-bd}
Let $0\le f\in C^1([0,1])$ be nondecreasing. Assume
that $0\le u\le 1$ is a stable radial solution to
\eqref{d-p}. Then
$$|u_r(1)|\le 2.$$

\end{lem}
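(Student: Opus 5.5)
The plan is to get the bound from the monotonicity of $f$, the radial ODE and the constraint $0\le u\le 1$ alone; I do not expect to need stability. The idea is to show the pointwise lower bound
$$|u_r(r)|\ge r\,|u_r(1)|\qquad\text{for all } 0<r\le 1,$$
and then integrate it against the oscillation bound $\int_0^1|u_r|\,dr\le 1$.

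\emph{Step 1 (monotonicity of $f(u(r))$).} By Proposition \ref{w12}(i), $u_r<0$ in $(0,1)$, so $u$ is decreasing in $r$. Since $f$ is nondecreasing, $r\mapsto f(u(r))$ is nonincreasing on $(0,1]$.

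\emph{Step 2 (two estimates from the ODE).} The equation reads $(r^{n-1}u_r)'=-r^{n-1}f(u)$, and Proposition \ref{w12}(ii) gives $r^{n-1}u_r(r)\to 0$ as $r\to0$. Integrating from $0$ to $r$ and using Step 1 gives
$$r^{n-1}|u_r(r)|=\int_0^r s^{n-1}f(u(s))\,ds\ge \frac{r^n}{n}f(u(r)).$$
Integrating from $r$ to $1$ and using Step 1 again gives
$$|u_r(1)|-r^{n-1}|u_r(r)|=\int_r^1 s^{n-1}f(u(s))\,ds\le f(u(r))\,\frac{1-r^n}{n}.$$
Substituting the first inequality into the second yields
$$|u_r(1)|\le r^{n-1}|u_r(r)|\Big(1+\frac{1-r^n}{r^n}\Big)=\frac{|u_r(r)|}{r},$$
which is the claimed lower bound.

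\emph{Step 3 (conclusion).} Since $u(1)=0$ and $u\le 1$, for every $\epsilon\in(0,1)$ we have
$$1\ge u(\epsilon)=\int_\epsilon^1|u_r(t)|\,dt\ge |u_r(1)|\,\frac{1-\epsilon^2}{2}.$$
Letting $\epsilon\to0$ gives $|u_r(1)|\le 2$.

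The only step that needs care is Step 2: the endpoint $r=0$ is handled by Proposition \ref{w12}(ii), and the comparison $f(u(s))\ge f(u(r))$ for $s<r$ (respectively $\le$ for $s>r$) must be applied in the right direction in each integral. Everything else is elementary.
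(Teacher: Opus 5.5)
Your proof is correct and follows essentially the paper's route. The paper obtains the same pointwise bound $|u_r(r)|\ge r|u_r(1)|$ by noting that $\Psi(t)=-nt^{\frac{n-1}{n}}u_r(t^{\frac1n})$ has $\Psi'(t)=f(u(t^{\frac1n}))$ nonincreasing, so $\Psi$ is concave and $\Psi(t)/t\ge\Psi(1)$; this is exactly your two-sided averaging estimate in Step 2, written in the variable $t=r^n$, and the final integration against $0\le u\le 1$ is identical (with stability never used, as you observe).
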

\begin{proof}
Consider the function $\Psi(t)=-nt^{\frac{n-1}n}u_r(t^{\frac 1n})$,
$t\in(0,1]$.  A direct calculation leads to
$$\Psi'(t)=-[(n-1)\frac{u_r(t^{\frac 1n})}{t}+u_{rr}(t^{\frac 1n})]
=f(u(t^{\frac 1n})),\quad \Psi''(t)=\frac 1nf'(u(t^{\frac 1n}))u_r(t^{\frac 1n})t^{\frac 1n-1}.$$
As $f$ is nonnegative and nondecreasing, the function $\Psi$ is nonnegative, nondecreasing, and concave. Therefore, $\frac{\Psi(t)}t$ is nonincreasing for $t\in (0,1]$,
which becomes
$$nt^{\frac{n-1}n}|u_r(t^{\frac 1n})|\ge nt|u_r(1)|.$$
Hence
$$|u_r(t^{\frac 1n})|\ge t^{\frac 1n}|u_r(1)|,\quad \forall \, t\in(0,1],$$
which is clearly equivalent to
$$|u_r(t)|\ge t|u_r(1)|,\quad \forall \, t\in(0,1].$$
Finally, by $0\le u\le 1$ and $u(1)=0$ we conclude by integrating
$$1\ge \int^1_0|u_r(t)|\,dt\ge \int^1_0t|u_r(1)|\,dt=\frac 12 |u_r(1)|.$$
\end{proof}

Combining Propositions \ref{key-prop} and
\ref{key-prop-2}, we are ready to prove Theorem \ref{thm}.
\begin{proof}
[Proof of Theorem \ref{thm}] Let $2\le n\le 6$ and $0\le u\le 1$  be
a stable radial solution to \eqref{d-p}. Applying  Proposition \ref{key-prop-2} we deduce

$$\int^1_0\frac{u_r(t)^2}t\,dt\leq C_n^2 u_r(1)^2 \int^1_0 t^{-n+2\sqrt{n-1}+1} dt.$$

Note that $-n+2\sqrt{n-1}+1>-1$ if and only if $2\leq n\leq 6$. Hence the last integral is finite and we obtain

$$\int^1_0\frac{u_r(t)^2}t\,dt\leq\frac{C_n^2}{-n+2\sqrt{n-1}+2} u_r(1)^2.$$

Using Proposition \ref{key-prop} we deduce

$$\frac 12 u_r(1)^2\le F(\|u\|_{L^\fz(B_1)})\le  M_n u_r(1)^2,$$

\noindent where $M_n$ is a dimensional constant.

Finally, by $u\in C^1(\overline B_1\backslash B_{1/2})$, one gets
$F(\|u\|_{L^\fz(B_1)})<+\fz$ and so that $u$ is regular.  Furthermore,
if $f$ is nondecreasing, Lemma \ref{un-bd} tells us $|u_r(1)|\le 2$, and thus
$$F(\|u\|_{L^\fz(B_1)})\le M_n u_r(1)^2\le 4M_n.$$
Since we are considering a finite
number of dimensions ($2\le n\le 6$), we can take $4M_n\leq C$ in order to
obtain a universal constant $C>0$.
Hence we finish this proof.
\end{proof}
We apply now Theorem \ref{thm} to prove Theorem
\ref{thm-2}.
\begin{proof}[Proof of Theorem \ref{thm-2}]
Let $0<\lz<\lz^{\star}$ and let $u_{\lz}\in C^2(\overline B_1)$ be a stable radial
solution to \eqref{g-dp}. It follows from Theorem \ref{thm} that
$$F(\|u_{\lz}\|_{L^\fz(B_1)})\le C(n)
\left(\frac{d u_{\lz}(1)}{dr}\right)^2.$$
Using Lemma 2.3 in \cite{v12} by Villegas, one has that
$$F(\|u_{\lz}\|_{L^\fz(B_1)})\le C(n)
\left(\frac{d u_{\lz}(1)}{dr}\right)^2
\le C(n)\left(\frac{d u_{\lz}(1/2)}{dr}\right)^2
\le C(n)\int^{3/4}_{1/2}\left(\frac{d u_{\lz}(t)}{dr}\right)^2\,dt,$$
where we used the fact that $t^{2n-2}\left(\frac{d u_{\lz}(t)}{dr}\right)^2$
is nondecreasing in the last inequality. Since
$\lim_{\lz\to \lz^{\star}}u_{\lz}=u^{\star}$ and
$\nabla u_{\lz}\to \nabla u^{\star}$ in $L^2(B_{3/4})$ as
$\lz \to \lz^{\star}$, passing to the limit $\lz\to \lz^{\star}$ we conclude that
$$F(\|u^{\star}\|_{L^\fz(B_1)})
\le  C(n)\int^{3/4}_{1/2}\left(\frac{d u^{\star}(t)}{dr}\right)^2\,dt<+\fz.$$

\end{proof}

\section*{Acknowledgments}
The authors are very grateful to Renzo Bruera and Professor Xavier Cabr\'e for
their previous suggestions regarding the need for revisions to the article's title,
as well as for all their valuable advice on this work.
Additionally, the authors extend their sincere thanks to Professor Yi Ru-Ya Zhang and
Yuan Zhou  for many helpful comments and valuable discussions related to the subject.
F. Peng was supported by National Key R\&D Program of China 2025YFA1018400.

\noindent Fa Peng

\noindent School of Mathematical Sciences, Beihang University, Changping District Shahe Higher Education Park South Third Street No. 9, Beijing 102206, P.R. China

\noindent{\it E-mail }:  \texttt{fapeng@buaa.edu.cn}

\bigskip

\noindent  Salvador Villegas

\noindent
Departamento de An\'{a}lisis Matem\'{a}tico, Universidad de Granada,
18071 Granada, Spain.

\noindent{\it E-mail }:  \texttt{svillega@ugr.es}

\end{document}